\newtheorem{theorem}{Theorem}
\newtheorem{Lemma}[theorem]{Lemma}
\journal{******}
\begin{document}
\begin{sloppypar}

\begin{frontmatter}
\title{
Existence for nonlinear fractional p-Laplacian equations
on finite graphs}

\author{Pengxiu Yu}
\ead{Pxyu@ruc.edu.cn}


\address{School of Mathematics,
Renmin University of China, Beijing 100872, P. R. China}

\begin{abstract}
In this paper, we assume that $q>0$,  $p>1$  and $s\in(0,1)$ ,  and consider the following nonlinear fractional p-Laplacian equations on finite graphs:
\begin{equation*}
\left\{
\begin{array}{lll}
\partial_t u^q(x,t)+(-\Delta)_p^su=0,\\[15pt]
u(x,t)|_{t=0}=u_0>0,
\end{array}
\right.
\end{equation*}
where $(-\Delta)_p^s$ is fractional Laplace operator on finite graphs.
We establish the existence of solutions to the above parabolic equation using an iterative approach, which is different from previous works on graphs. Furthermore, we also derive some
energy estimate  of the solution.
\end{abstract}

\begin{keyword}
fractional p-Laplace operator\sep finite graphs
\sep nonlocal  nonlinear equation
\MSC[2000]
35B45 \sep 35K61 \sep 35R02 \sep 35R20
\end{keyword}

\end{frontmatter}

\titlecontents{section}[0mm]
                       {\vspace{.2\baselineskip}}
                       {\thecontentslabel~\hspace{.5em}}
                        {}
                        {\dotfill\contentspage[{\makebox[0pt][r]{\thecontentspage}}]}
\titlecontents{subsection}[3mm]
                       {\vspace{.2\baselineskip}}
                       {\thecontentslabel~\hspace{.5em}}
                        {}
                       {\dotfill\contentspage[{\makebox[0pt][r]{\thecontentspage}}]}

\setcounter{tocdepth}{2}



\numberwithin{equation}{section}

\section{Introduction}\let\thefootnote\relax\footnotetext{
On behalf of all authors, the corresponding author states that there is no conflict of interest.}

Let $(M^n,g_0)$ be a compact Riemannian manifold
with the dimension $n\geq2$ and
$s\in (0,\displaystyle\frac{n}{2})$.
In the research of a family of conformally
covariant operators, Graham-Zworski \cite{GrahamZworski}
studied  the nonlocal evolutionary equation for a metric
$g(t)=u^{\frac{4}{n-2s}}(t)g_0$ on $M$, called a fractional Yamabe flow. If $M=\mathbb{S}^n$, by  the following stereographic projection:
\begin{equation*}
\pi:\mathbb{S}^n\backslash\{ \mathrm{northpole} \}
\rightarrow\mathbb{R}^n,
\end{equation*}
the fractional Yamabe flow can be written in the following form: 
\begin{equation}\label{27}
\partial_t \left( u^{\frac{n+2s}{n-2s}}(t)\right)
=-(-\Delta)^s u(t) +r_s^gu^{\frac{n+2s}{n-2s}}(t)
\hspace{0.2cm} \mathrm{in}\hspace{0.2cm} \mathbb{R}^n,
\end{equation}
where $(-\Delta)^s u $ denotes the usual fractional Laplacian defined by
\begin{equation}\label{28}
\begin{array}{lll}
(-\Delta)^s u(x,t)
:&=
\mathrm{P.V.}\displaystyle\int_{\mathbb{R}^n}
\displaystyle\frac{u(x,t)-u(y,t)}{|x-y|^{n+2s}}dy\\[20pt]
&=
\lim\limits_{\epsilon\rightarrow0}
\displaystyle
\int_{\mathbb{R}^n\backslash B_{\epsilon}(x)}
\displaystyle\frac{u(x,t)-u(y,t)}{|x-y|^{n+2s}}dy,\\[10pt]
\end{array}
\end{equation}
where the symbol P.V. means “in the principal value sense”. The quantity $r_s^g:=\frac{1}{\mathrm{vol}(M)}\int_M R_s^g dvol_g$ in (\ref{27}) is the average of the fractional order curvature $R_s^g$ that coincides with the classical scalar curvature in the case  of  $s=1$. When $s=1$, in particular,   the fractional Yamabe flow  (\ref{27})
 turns to be  the classical Yamabe flow introduced by
Hamilton \cite{Hamilton}. We note that the stationary problem for the fractional Yamabe flow is the fractional Yamabe problem. Let $s\in(0,1)$  and the manifold $M$ be locally conformally flat with positive Yamabe constant,
then the existence of a solution is given in \cite{delMar}.  The global existence for the
fractional Yamabe flow (\ref{27}) and the asymptotic convergence of (\ref{27}) to the stationary solution are showed in \cite{Jinxiong}. Subsequently, the global existence of a smooth solution to (\ref{27}) and its convergence as $t\rightarrow\infty$ to a metric of constant scalar curvature can be found  in \cite{Daskalopoulos}. Recently, Kato-Misawa-Nakamura-Yamaura \cite{KatoMisawa} proved the existence of a global-in-time weak solution to a doubly nonlinear parabolic fractional p-Laplacian equation on a bounded domain in $\mathbb{R}^n$.

The fractional Laplace operator, as an important example of a nonlocal pseudo-differential operator, has  been a classical topic in harmonic  analysis  and functional analysis within Euclidean space. We refer readers to the remarkable books and papers \cite{Adams,BrezisMironescu,NezzaPalatucci,RunstSickel} for more details.
The fractional Laplace operator and the related nonlinear problems appear in many fields, for instance,
conservation laws \cite{BilerKarch}, optimization \cite{DuvautLions}, stratified materials \cite{SavinValdinoci}, singular set of minima of variational functionals \cite{KristensenMingione,Mingione2}, minimal surfaces \cite{CaffarelliValdinoci}, crystal dislocation \cite{BilerKarchMonneau,GonzalezMonneau}, gradient potential theory \cite{BogdanByczkowski,Mingione},
water waves \cite{CraigSulem,Llave,GachterGrote} and so on.

There are many equivalent definitions about the fractional Laplace operator.  As for  the corresponding discrete versions and related problems, one can refer to \cite{CiaurriGillespie2,JuZhang,Kwasnicki,LizamaMurilloArcila,WangJ2,XiangZhang}
for more information. We now focus on the definition given by the heat semigroup. For fixed $s\in(0,1)$, the representation is as follows,
\begin{equation}\label{29}
(-\Delta)^{s} u(x)
=
\frac{s}{\Gamma(1-s)}\displaystyle\int_{0}^{+\infty}
\left(u(x)-e^{t\Delta}u(x)\right)t^{-1-s}dt,
\end{equation}
where $\Gamma(\cdot)$  is the Gamma function and
$e^{t\Delta}$ denotes the heat semigroup of the Laplace operator $\Delta$. Note that $-\Delta$ is a local linear
operator, while $(-\Delta)^{s}$ is a nonlocal linear operator. This is the most essential
 difference between them. On the other hand, we can find the relation between these two operators.
According to \cite{NezzaPalatucci}, we  know that  if $n>1$, then for any
$u\in C_0^{\infty}(\mathbb{R}^n)$, we have
\begin{equation}\label{30}
\lim\limits_{s\rightarrow{1-}}(-\Delta)^{s} u=-\Delta u
\hspace{0.5cm}
\mathrm{and}
\hspace{0.5cm}
\lim\limits_{s\rightarrow{0+}}(-\Delta)^{s} u= u.
\end{equation}

Nowadays, people not only focus on the continuous situation regarding the fractional Laplace operator, but also pay great attention to its discrete aspects.
We recommend some works concerning the discrete form of the operator (\ref{29}), such as the stochastically complete infinite graph \cite{WangJ}, the one dimensional lattice graph \cite{CiaurriGillespie}, and the n-dimensional lattice graph \cite{LizamaRoncal}.

In this paper, we consider the existence of solution to the nonlinear fractional Laplacian equations on finite
graphs. Before introducing  the fractional Laplace operator $(-\Delta)^s$ on the finite graph, we firstly recall some basic notions.  Let $V=\{x_1,\cdots,x_n\}$ be a finite set of vertices and $\#V$ be the number of all distinct
vertices in $V$. The set of edges is defined by  $E=\{xy: x,y\in V, x\sim y\}$, where $x\sim y$ denotes that $x$ is connected to $y$ by an edge $xy$. For any vertex $x\in V$, there exists a positive finite measure $\mu: V\rightarrow \mathbb{R}^+$. And for each edge $xy\in E$, there is a weight $w_{xy}>0$ for any edge $xy\in V$ and we assume $w_{xy}=w_{yx}$ for any $x\sim y$. Then we define a finite graph $G=(V,E,\mu,w)$.
Furthermore, we call $G$  connected if any two vertices can be connected by finite edges. Throughout this paper, we always assume that $G$ is a connected finite graph. Next, we cite  the definition of the fractional Laplace operator
$(-\Delta)^{s}$ on a connected finite graph $G$ , which is  introduced by \cite{MengjieZhang}. For any $s\in(0,1)$ and $u\in C(G)$, we have
\begin{equation}\label{31}
\begin{array}{lll}
(-\Delta)^{s} u(x)
&=
\displaystyle\frac{s}{\Gamma(1-s)}\displaystyle\int_{0}^{+\infty}
\left(u(x)-e^{t\Delta}u(x)\right)t^{-1-s}dt\\[15pt]
&=
\displaystyle\frac{s}{\Gamma(1-s)}\displaystyle\int_{0}^{+\infty}
\left(u(x)-\sum_{y\in V}h(t,x,y)\mu(y)u(y)\right)
t^{-1-s}dt\\[15pt]
&=
\displaystyle\frac{s}{\Gamma(1-s)}\displaystyle\int_{0}^{+\infty}
\sum_{y\in V,y\neq x}h(t,x,y)\mu(y)(u(x)-u(y))
t^{-1-s}dt\\[15pt]
&=
\displaystyle\frac{1}{\mu(x)}\sum_{y\in V,y\neq x}
W_s(x,y)(u(x)-u(y)),
\end{array}
\end{equation}
where $h(t,x,y)$ is heat kernel and the function
\begin{equation}\label{32}
W_s(x,y)
=
\displaystyle\frac{s}{\Gamma(1-s)}
\mu(x)\mu(y)\displaystyle\int_{0}^{+\infty}
h(t,x,y)t^{-1-s}dt,\hspace{0.15cm}\forall x\neq y\in V.
\end{equation}
Further,  by a direct computation as (3.7)   in \cite{MengjieZhang},  $W_s(x,y)$  has the following form:
\begin{equation}\label{33}
W_s(x,y)
=
-\mu(x)\mu(y)\sum_{i=1}^{n}\lambda_i^s\phi_i(x)\phi_i(y),
\hspace{0.15cm}\forall x\neq y\in V,
\end{equation}
where $\lambda_i$ is the eigenvalue of the Laplace operator $-\Delta$   on the finite graph $G$, and $\phi_i$ is the corresponding orthonormal eigenfunction.
Moreover,  from  the inequality (3.8)   in   \cite{MengjieZhang},   we know  that
\begin{equation}\label{34}
0<W_s(x,y)=W_s(y,x)<+\infty, \hspace{0.15cm}\forall x\neq y\in V.
\end{equation}
Hence, for any $x\in V$, we can define  the fractional gradient form of $u\in C(G)$ by the function $W_s(x,y)$ as follows:
\begin{equation*}
\nabla^s u(x)
=
\left(
\sqrt{ \frac{W_s(x,y_1)}{2\mu(x)} } ( u(x)-u(y_1)) ,
\cdots,
\sqrt{\frac{W_s(x,y_{n-1})}{2\mu(x)} } ( u(x)-u(y_{n-1}))
\right).
\end{equation*}
The inner product of the gradient is represented as
\begin{equation*}
\nabla^s u(x) \nabla^s v(x)
=
\displaystyle\frac{1}{2\mu(x)}
\sum_{y\in V,y\neq x}
W_s(x,y)(u(x)-u(y))(v(x)-v(y))
\end{equation*}
and the corresponding   length of the gradient
of $u$ is defined  as
\begin{equation*}
|\nabla^s u(x)|=\sqrt{\nabla^s u(x) \nabla^s u(x)}
=
\left(
\frac{1}{2\mu(x)}\sum_{y\in V,y\neq x}
W_s(x,y)(u(x)-u(y))^2
\right)^\frac{1}{2}.
\end{equation*}
Now, be the definition above,   the fractional Laplace operator
$(-\Delta)^{s}$ can be  written as
\begin{equation}\label{35}
(-\Delta)^{s} u(x)
=
\frac{1}{\mu(x)}\sum_{x\neq y\in V}W_s(x,y))
(u(y)-u(x)).
\end{equation}
For any real number $s\in (0,1)$ and $p\in[1,+\infty)$,
the fractional Sobolev space $W^{s,p}(G)$ reads as
\begin{equation*}
W^{s,p}(G)=
\left\{
u\in C(G):\displaystyle\int_V
(|\nabla^s u|^p+|u|^p)d\mu
<+\infty
\right\}
\end{equation*}
and the related norm is
\begin{equation*}
||u||_{W^{s,p}(G)}
=
\left(
||\nabla^s u ||_p^p
+||u||_p^p
\right)^{\frac{1}{p}}.
\end{equation*}
Particularly, if $p=2$, then the Sobolev space
$W^{s,2}(G)$ is a Hilbert space with the inner product
\begin{equation*}
\langle u,v \rangle_{W^{s,2}(G)}
=
\displaystyle\int_V (\nabla^s u\nabla^s v+uv)d\mu.
\end{equation*}
There are some important properties of the  fractional Laplace operator $(-\Delta)^{s}$ , which can be found in \cite{MengjieZhang}.
Combining the above definition of $(-\Delta)^{s} u(x)$, we can derive
the  fractional p-Laplace operator $(-\Delta)^s_p$  on graphs from the variation of $||\nabla^s u||_p^p$ with respect to $u$, which can be read as
\begin{equation*}
(-\Delta)_p^s u(x)=\frac{1}{2\mu(x)}\sum_{y\in V,y\neq x}
\left( |\nabla^s u|^{p-2}(y)+|\nabla^s u|^{p-2}(x) \right)
W_s(x,y)(u(x)-u(y)).
\end{equation*}
For some other significant works on graphs, we refer readers to \cite{Grigoryan1,Grigoryan2,Grigoryan3,Keller,yang}.

Based on the above works, the main result of our paper is as follows.

\begin{theorem}\label{thm1}
Let $G=(V,E)$ be a connected finite graph. Let $s\in(0,1)$ and $q>0$ with $p>1$. Suppose the initial datum $u_0>0$, then there exists a solution to the following  equation  within any finite time interval,
\begin{equation}\label{36}
  \partial_t u^q+(-\Delta)_p^s u=0.
\end{equation}
Moreover, the solution $u$ satisfies  $0<\min_V u_0 \leq u \leq \max_V u_0<\infty $ and  the following energy estimates:
\begin{equation}\label{40}
\displaystyle\int_0^{\infty}\int_V(u^{\frac{q-1}{2}}\partial_t u)^2d\mu dt
\leq
\frac{1}{pq}\displaystyle\int_V|\nabla^s u_0|^pd\mu
\end{equation}
and
\begin{equation}\label{41}
\displaystyle\frac{q}{q+1}
\displaystyle\int_{V\times\{T\}}u^{q+1} d\mu
+
\displaystyle\int_0^T\int_V|\nabla^s u|^pd\mu dt
=
\displaystyle\frac{q}{q+1}\int_V u_0^{q+1}d\mu.
\end{equation}
\end{theorem}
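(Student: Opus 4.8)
\emph{Strategy.} Since $V$ is finite, \eqref{36} is a system of $\#V$ ordinary differential equations; the only genuine difficulties are that, for $1<p<2$, the nonlinearity inside $(-\Delta)^s_p$ is not locally Lipschitz near constant states (so Picard--Lindel\"of does not apply directly), and that one must keep the solution positive. I would therefore build the solution by an implicit time--discretization. Fix $T>0$, set $m_0:=\min_V u_0>0$, $M_0:=\max_V u_0<\infty$, and for $N\in\mathbb{N}$ put $\tau:=T/N$, $t_k:=k\tau$. Given $u_k\in C(G)$ with $u_k>0$, define $u_{k+1}$ as a minimizer over $\{v\in C(G):v\ge0\}$ of
\[
J_k(v):=\frac1p\int_V|\nabla^s v|^p\,d\mu+\frac1\tau\int_V\Big(\frac{v^{q+1}}{q+1}-u_k^q\,v\Big)d\mu .
\]
Because $v\mapsto|\nabla^s v(x)|$ is a seminorm, $|\nabla^s v|^p$ is convex in $v$, while $t\mapsto\frac{t^{q+1}}{q+1}-ct$ is coercive and strictly convex on $[0,\infty)$ since $q>0$; hence $J_k$ attains its minimum, and (recalling, as the paper notes, that the $v(x)$--derivative of $\frac1p\|\nabla^s v\|_p^p$ equals $\mu(x)(-\Delta)^s_p v(x)$) its Euler--Lagrange equation is exactly the backward Euler step
\[
\frac{u_{k+1}^q-u_k^q}{\tau}+(-\Delta)^s_p u_{k+1}=0 .
\]

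\emph{A priori bounds.} Next I would prove the discrete maximum principle. If $u_{k+1}>M:=\max_V u_k$ on a nonempty set, replace it by $\min(u_{k+1},M)$: this does not increase $\|\nabla^s\cdot\|_p^p$ (a truncation never increases $|v(x)-v(y)|$) and strictly decreases the second term of $J_k$, since on $\{u_{k+1}>M\ge u_k\}$ the map $t\mapsto\frac{t^{q+1}}{q+1}-u_k^q t$ is strictly increasing, contradicting minimality; the symmetric argument with $\max(u_{k+1},\min_V u_k)$ gives the lower bound. Thus every minimizer lies in $K:=\{v:m_0\le v\le M_0\}$, so it is interior and, $J_k$ being strictly convex there, unique. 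Inductively $m_0\le u_k\le M_0$ for all $k$. Since $(-\Delta)^s_p$ is continuous and $K$ compact, $\|(-\Delta)^s_p u_{k+1}\|_\infty\le C$, hence $\|u_{k+1}^q-u_k^q\|_\infty\le C\tau$, and since $r\mapsto r^q$ is bi-Lipschitz on $[m_0,M_0]$, $\|u_{k+1}-u_k\|_\infty\le C'\tau$ with $C'$ independent of $\tau$.

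\emph{Passage to the limit.} Let $u^\tau$ be the piecewise--linear--in--$t$ interpolant of $(u_k)_k$, $\bar u^\tau$ the piecewise--constant one, and $w^\tau$ the piecewise--linear interpolant of $(u_k^q)_k$. All are uniformly bounded and $u^\tau,w^\tau$ are equi-Lipschitz in $t$; by Arzel\`a--Ascoli, along a subsequence $u^\tau\to u$ and $w^\tau\to w$ uniformly on $[0,T]$, with $u(0)=u_0$, $m_0\le u\le M_0$, $w=u^q$ (the two families agree at the nodes and are equi-Lipschitz), and $\bar u^\tau\to u$ uniformly. On each $(t_k,t_{k+1})$ one has $\partial_t w^\tau=-(-\Delta)^s_p\bar u^\tau$, and passing to the limit (first in $\mathcal{D}'((0,T))$, then observing the right-hand side is continuous in $t$) gives $\partial_t u^q+(-\Delta)^s_p u=0$ on $[0,T]$ with $u\in C^1$. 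Since $m_0,M_0$ do not depend on $T$, a diagonal argument over $T=1,2,\dots$ produces a solution on $[0,\infty)$ with $0<\min_V u_0\le u\le\max_V u_0$.

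\emph{Energy estimates.} Testing \eqref{36} with $\mu(x)\,\partial_t u(x)$ and summing, and using $\partial_t u^q=qu^{q-1}\partial_t u$ together with the chain rule and the identity above (so that $\sum_x\mu(x)(-\Delta)^s_p u(x)\,\partial_t u(x)=\frac{d}{dt}\big(\frac1p\int_V|\nabla^s u|^p d\mu\big)$), yields $q\int_V(u^{(q-1)/2}\partial_t u)^2 d\mu=-\frac{d}{dt}\big(\frac1p\int_V|\nabla^s u|^p d\mu\big)$; integrating over $(0,\infty)$ and dropping the nonnegative terminal term (which exists by monotonicity) gives \eqref{40}. Testing instead with $\mu(x)\,u(x)$: then $\sum_x\mu(x)u(x)\,\partial_t u^q(x)=\frac{d}{dt}\big(\frac{q}{q+1}\int_V u^{q+1}d\mu\big)$, while the elementary summation-by-parts identity for graphs (symmetrizing in $x,y$, cf.\ \eqref{34}) gives $\sum_x\mu(x)u(x)(-\Delta)^s_p u(x)=\int_V|\nabla^s u|^p d\mu$; hence $\frac{d}{dt}\big(\frac{q}{q+1}\int_V u^{q+1}d\mu\big)+\int_V|\nabla^s u|^p d\mu=0$, and integrating over $(0,T)$ gives \eqref{41}. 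I expect the discrete maximum principle — the truncation comparison that simultaneously controls positivity and the upper bound, hence all the $\tau$-uniform estimates — to be the main obstacle; once it is in place, the compactness and the energy identities are routine on a finite graph.
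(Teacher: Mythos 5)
Your proposal is correct, but it reaches the theorem by a genuinely different route from the paper. The paper linearizes in continuous time: it freezes the coefficient and iterates $qu_{n-1}^{q-1}\partial_t u_n+(-\Delta)_p^s u_n=0$ (equation (\ref{14})), solving each step as an ODE system via Picard--Lindel\"of after a parabolic comparison argument (Lemma \ref{maximumprinciple}, applied to $u\pm\epsilon t$) supplies the a priori bound (\ref{011}); it then passes to the limit in $n$ using Aubin--Lions compactness (Lemma \ref{compact}). You instead discretize in time by minimizing movements: each backward-Euler step is a strictly convex, coercive minimization on the finite graph, the $L^\infty$ bounds come from a variational truncation rather than a parabolic maximum principle, and the limit $\tau\to0$ is taken by Arzel\`a--Ascoli. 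Your scheme buys something real: for $1<p<2$ the map $u\mapsto(-\Delta)_p^s u$ is only continuous, not locally Lipschitz, near states with $\nabla^s u=0$, so the paper's appeal to the bound on $\partial M(u)(x_j)/\partial u(x_j)$ and to ODE uniqueness is delicate exactly where your convex-minimization step is unproblematic; your scheme also gives a well-defined unique iterate at each step, whereas the paper's limit passage (\ref{19})--(\ref{22}) leaves the identification of the limit of $u_{n-1}^{q-1}\partial_t u_n$ somewhat informal. What the paper's route buys is that it stays in continuous time throughout and reuses a single comparison lemma for all the approximating problems. The energy estimates (\ref{40}) and (\ref{41}) are obtained identically in both arguments (testing with $\partial_t u$ and with $u$ and integrating by parts via Lemma 3); your derivation of (\ref{41}) is in fact cleaner than the paper's manipulation (\ref{25})--(\ref{26}). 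One shared technicality neither treatment fully addresses: the chain rule $\frac{d}{dt}\frac1p\int_V|\nabla^s u|^p d\mu=\int_V|\nabla^s u|^{p-2}\nabla^s u\,\nabla^s\partial_t u\,d\mu$ at instants where $\nabla^s u$ vanishes and $p<2$, but this does not affect the validity of the integrated identities.
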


We observe from (\ref{40}) that
there exists a sequence of $t_i$ such that
\begin{equation}\label{43}
\displaystyle\int_V\left(u(x,t_i)^{\frac{q-1}{2}}\partial_t u(x,t_i)\right)^2d\mu
\rightarrow0
\end{equation}
as $t_i\rightarrow+\infty$. Since $V$ is a finite graph and $u\geq\min_V u_0>0$ ,  this together with (\ref{43}) leads to
$\partial_t u(x,t_i)\rightarrow0$ as $t_i\rightarrow+\infty,$   for any $x\in V$.  Then we can assume  $u(x,t_i)\rightarrow\bar{u}$ as $t_i\rightarrow+\infty$ and hence $\bar{u}$ is the solution of
\begin{equation*}
 (-\Delta)_p^s u=0.
\end{equation*}
Moreover, from (\ref{41}), we also note that
 \begin{equation}\label{42}
 \lim\limits_{t_i\rightarrow+\infty}\displaystyle\int_V|\nabla^s u(x,t_i)|^pd\mu=0.
 \end{equation}

This paper is organized as follows. In Section 2,
we present several lemmas which are essential for demonstrating the existence of a solution to the equation (\ref{36}).
In Section 3, we obtain some important estimates for the corresponding solution $u$.

\section{Existence results}
We firstly introduce the lemma from J. P. Aubin and
J. L. Lions, which we will use later in our proof.

\begin{Lemma}\label{compact}
Let $X_0\subset X\subset X_1$ be three Banach spaces.
Suppose that $X_0$ and $X_1$ are reflexive and the injection  $X_0\subset X$ is compact. Let $1<p_0<p_1<\infty$, define
\begin{equation*}
B=\left\{
u: u\in L^{p_0}(T_0,T_1; X_0)\hspace{0.2cm} \mathrm{and} \hspace{0.2cm} \partial_t u\in L^{p_1}(T_0,T_1; X_1)
\right\}
\end{equation*}
with the norm
\begin{equation*}
||u||_B=||u||_{ L^{p_0}(T_0,T_1; X_0) }+
||\partial_t u ||_{L^{p_1}(T_0,T_1; X_1)}.
\end{equation*}
Then $B$ is a Banach space and the injection
$B\subset L^{p_0}(T_0,T_1; X)  $ is compact.
\end{Lemma}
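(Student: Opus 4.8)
The statement is the Aubin--Lions compactness lemma, and the plan is to treat its two assertions separately. That $B$ is a Banach space is routine: given a Cauchy sequence $\{u_k\}\subset B$, it is Cauchy in $L^{p_0}(T_0,T_1;X_0)$ and its time derivatives are Cauchy in $L^{p_1}(T_0,T_1;X_1)$, so by completeness of the Bochner spaces $u_k\to u$ and $\partial_t u_k\to g$ in the respective norms; testing the identity $\int_{T_0}^{T_1}u_k\varphi'\,dt=-\int_{T_0}^{T_1}(\partial_t u_k)\varphi\,dt$ against $\varphi\in C_c^\infty(T_0,T_1)$ and passing to the limit in $X_1$ (using the continuous embeddings $X_0\hookrightarrow X\hookrightarrow X_1$) identifies $g=\partial_t u$, whence $u\in B$ and $u_k\to u$ in $B$.

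For the compact injection I would first reduce to a normalized situation. Given a bounded sequence $\{u_k\}$ in $B$, the reflexivity of $X_0$, $X_1$ together with $1<p_0,p_1<\infty$ makes $L^{p_0}(T_0,T_1;X_0)$ and $L^{p_1}(T_0,T_1;X_1)$ reflexive, so a subsequence satisfies $u_k\rightharpoonup u$ in $L^{p_0}(T_0,T_1;X_0)$ and $\partial_t u_k\rightharpoonup\partial_t u$ in $L^{p_1}(T_0,T_1;X_1)$ (the latter by closedness of the distributional derivative under weak limits); replacing $u_k$ by $u_k-u$, it suffices to prove that $\|u_k\|_B\le M$ and $u_k\rightharpoonup 0$ imply $u_k\to0$ strongly in $L^{p_0}(T_0,T_1;X)$. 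The elementary engine is Ehrling's inequality: for each $\eta>0$ there is $C_\eta$ with $\|w\|_X\le\eta\|w\|_{X_0}+C_\eta\|w\|_{X_1}$ for all $w\in X_0$, which I would prove by contradiction, normalizing $\|w_k\|_{X_0}=1$, using the compactness of $X_0\hookrightarrow X$ to extract a subsequence converging strongly in $X$, and using the continuity and injectivity of $X\hookrightarrow X_1$ to see that its limit vanishes. Applying this inequality pointwise in $t$ and integrating gives $\|u_k\|_{L^{p_0}(T_0,T_1;X)}\le\eta M+C_\eta\|u_k\|_{L^{p_0}(T_0,T_1;X_1)}$, so after letting $k\to\infty$ and then $\eta\to0$ the task reduces to showing $u_k\to0$ in $L^{p_0}(T_0,T_1;X_1)$.

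This last convergence is where the derivative bound enters. Since $p_1>1$, Hölder's inequality yields the equicontinuity estimate $\|u_k(t)-u_k(\sigma)\|_{X_1}\le M|t-\sigma|^{1-1/p_1}$; choosing via the mean value theorem a time at which $\|u_k(\cdot)\|_{X_0}$, and hence $\|u_k(\cdot)\|_{X_1}$, is controlled by the $L^{p_0}$-bound, this gives a $k$-uniform bound $\sup_t\|u_k(t)\|_{X_1}\le M''$. For small $\delta>0$ I would introduce the time averages $v_k^\delta(t)=\frac1{2\delta}\int_{t-\delta}^{t+\delta}u_k(\tau)\,d\tau$ on $(T_0+\delta,T_1-\delta)$: the equicontinuity estimate forces $\sup_t\|u_k(t)-v_k^\delta(t)\|_{X_1}\le C\delta^{1-1/p_1}$ uniformly in $k$, the two leftover intervals $(T_0,T_0+\delta)$ and $(T_1-\delta,T_1)$ contribute at most $2\delta(M'')^{p_0}$ to $\|u_k\|_{L^{p_0}(T_0,T_1;X_1)}^{p_0}$, and for each fixed $\delta$ and $t$ the family $\{v_k^\delta(t)\}_k$ is bounded in $X_0$ and converges weakly to $0$ there (test $u_k\rightharpoonup0$ against $\mathbf 1_{(t-\delta,t+\delta)}\otimes\xi$), so by the compactness of $X_0\hookrightarrow X\hookrightarrow X_1$ one gets $v_k^\delta(t)\to0$ in $X_1$ for every $t$ with a $k$-uniform bound, and dominated convergence gives $\|v_k^\delta\|_{L^{p_0}(T_0+\delta,T_1-\delta;X_1)}\to0$ as $k\to\infty$. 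Combining the three contributions, $\limsup_k\|u_k\|_{L^{p_0}(T_0,T_1;X_1)}\le C\delta^{1-1/p_1}+(2\delta)^{1/p_0}M''$ for every $\delta>0$; letting $\delta\to0$ completes the proof.

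I expect the genuine difficulty to lie in this final step: one must upgrade weak convergence in $L^{p_0}(T_0,T_1;X_0)$ to strong convergence in $L^{p_0}(T_0,T_1;X_1)$, which is possible only because the bound on $\partial_t u_k$ in $L^{p_1}(T_0,T_1;X_1)$ buys time-equicontinuity that can be combined with the compact embedding via the mollification $v_k^\delta$, with a small amount of extra care needed near the endpoints of the time interval. By contrast, the Banach-space assertion and Ehrling's inequality are standard.
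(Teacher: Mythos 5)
Your argument is correct, but there is nothing in the paper to compare it with: the paper states this lemma as a quoted result of Aubin and Lions and offers no proof at all, using it only as a black box in Lemma 5. What you have written is essentially the classical proof (as in Lions or Temam): completeness of $B$ via completeness of the two Bochner spaces and closedness of the distributional derivative; Ehrling's interpolation inequality $\|w\|_X\le\eta\|w\|_{X_0}+C_\eta\|w\|_{X_1}$ to reduce compactness in $L^{p_0}(T_0,T_1;X)$ to strong convergence in $L^{p_0}(T_0,T_1;X_1)$; and the derivative bound converted into H\"older equicontinuity in $X_1$, combined with time-averaging $v_k^\delta$ and the compact embedding to get that strong convergence. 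All the steps check out, including the endpoint bookkeeping near $T_0$ and $T_1$ and the pigeonhole selection of a good time slice to get the uniform sup bound in $X_1$. Two small points you may want to make explicit if you write this up in full: pointwise evaluation $u_k(t)$ is legitimate because $\partial_t u_k\in L^{p_1}\subset L^1$ gives an absolutely continuous $X_1$-valued representative, and the vanishing of the limit in Ehrling's contradiction argument uses that the inclusion $X\subset X_1$ is a continuous injection, which is implicit in the nested-space hypothesis. Neither is a gap, just a place where the standard hypotheses are being used silently.
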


The next lemma introduces the formula of integration by parts, which is a necessary step in using the calculus of variations.

\begin{Lemma}
For any $u,\ v\in C(G)$ and $s\in(0,1)$,  there holds
\begin{equation*}
\displaystyle\int_V v \cdot (-\Delta)_p^{s} u d\mu
=
\displaystyle\int_V |\nabla^s u|^{p-2}\cdot \nabla^s u \cdot \nabla^s v d\mu.
\end{equation*}
\end{Lemma}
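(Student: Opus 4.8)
The plan is to compute both sides by unfolding the definitions of $(-\Delta)^s_p u$ and of the fractional gradient inner product, and then to match them via a symmetrization (discrete integration by parts) over ordered pairs of vertices. First I would write the left-hand side explicitly: since integration against $\mu$ on a finite graph is a finite sum, $\int_V v\cdot(-\Delta)^s_p u\, d\mu = \sum_{x\in V}\mu(x)v(x)(-\Delta)^s_p u(x)$, and substituting the stated formula for $(-\Delta)^s_p u(x)$ the factor $\mu(x)$ cancels, leaving
\[
\tfrac12\sum_{x\in V}\sum_{y\in V,\,y\neq x}\bigl(|\nabla^s u|^{p-2}(y)+|\nabla^s u|^{p-2}(x)\bigr)W_s(x,y)\,(u(x)-u(y))\,v(x).
\]

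The key step is the symmetrization. Using $W_s(x,y)=W_s(y,x)$ from \eqref{34} and the invariance of the double sum under swapping the roles of $x$ and $y$, I would average the expression above with its image under $x\leftrightarrow y$. In the swapped copy the factor $v(x)$ becomes $v(y)$, the difference $u(x)-u(y)$ becomes $u(y)-u(x)$, while the symmetric combination $|\nabla^s u|^{p-2}(y)+|\nabla^s u|^{p-2}(x)$ and $W_s(x,y)$ are unchanged. Adding the two copies and dividing by $2$ turns $v(x)$ into $\tfrac12(v(x)-v(y))$ together with a sign adjustment, so the left-hand side becomes
\[
\tfrac14\sum_{x\in V}\sum_{y\in V,\,y\neq x}\bigl(|\nabla^s u|^{p-2}(y)+|\nabla^s u|^{p-2}(x)\bigr)W_s(x,y)\,(u(x)-u(y))(v(x)-v(y)).
\]

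On the other hand, I would expand the right-hand side. By definition $|\nabla^s u|^{p-2}(x)\,\nabla^s u(x)\nabla^s v(x) = |\nabla^s u|^{p-2}(x)\cdot\frac{1}{2\mu(x)}\sum_{y\neq x}W_s(x,y)(u(x)-u(y))(v(x)-v(y))$, and integrating against $\mu$ again cancels $\mu(x)$ and produces $\tfrac12\sum_x\sum_{y\neq x}|\nabla^s u|^{p-2}(x)W_s(x,y)(u(x)-u(y))(v(x)-v(y))$. Symmetrizing this single-weight sum in $x\leftrightarrow y$ (the product $(u(x)-u(y))(v(x)-v(y))$ is even under the swap, and $W_s$ is symmetric) replaces $|\nabla^s u|^{p-2}(x)$ by $\tfrac12\bigl(|\nabla^s u|^{p-2}(x)+|\nabla^s u|^{p-2}(y)\bigr)$, giving exactly the quarter-sum displayed above. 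Hence the two sides agree.

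The only mildly delicate point — not really an obstacle — is bookkeeping the symmetrization: making sure the factor of $\tfrac12$ from the definition of $\nabla^s u\nabla^s v$, the factor of $\tfrac12$ in the formula for $(-\Delta)^s_p u$, and the factor of $\tfrac12$ from averaging over the two orientations of each edge are all accounted for, so that both sides land on the same $\tfrac14$-weighted double sum. Since $V$ is finite all sums are finite and no convergence or principal-value issues arise, so once the coefficients are tracked the identity is immediate.
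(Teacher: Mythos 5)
Your proof is correct and rests on the same mechanism as the paper's: relabeling the double sum over ordered pairs via $x\leftrightarrow y$ and using the symmetry $W_s(x,y)=W_s(y,x)$. The only cosmetic difference is that you symmetrize both sides down to a common $\tfrac14$-weighted sum over $(u(x)-u(y))(v(x)-v(y))$, whereas the paper transforms the right-hand side directly into the left-hand side by moving the $v(y)$-terms onto $v(x)$; the bookkeeping of the factors of $\tfrac12$ in your version checks out.
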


\begin{proof}
A direct calculation shows
\begin{equation}\label{37}
\begin{aligned}
\int_V |\nabla^s u|^{p-2}\cdot \nabla^s u \cdot \nabla^s v d\mu
=&
\sum_{x\in V} |\nabla^s u|^{p-2}(x)
\cdot\frac{1}{2}\sum_{y\in V,y\neq x}
W_s(x,y)(u(x)-u(y))(v(x)-v(y))\\[20pt]
=&
\sum_{x\in V}\sum_{y\in V,y\neq x}\frac{1}{2}\cdot
|\nabla^s u|^{p-2}(x)W_s(x,y)(u(x)-u(y))v(x)\\[20pt]
&-
\sum_{x\in V}\sum_{y\in V,y\neq x}\frac{1}{2}
\cdot|\nabla^s u|^{p-2}
W_s(x,y)(u(x)-u(y))v(y).
\end{aligned}
\end{equation}

According to
\begin{equation}\label{38}
\begin{array}{lll}
-
&\displaystyle\sum_{x\in V}\sum_{y\in V,y\neq x}\frac{1}{2}
\cdot|\nabla^s u|^{p-2}(x)
W_s(x,y)(u(x)-u(y))v(y)\\[25pt]
&=
-
\displaystyle\sum_{y\in V}\sum_{x\in V,x\neq y}\frac{1}{2}
\cdot|\nabla^s u|^{p-2}(y)
W_s(x,y)(u(y)-u(x))v(x)\\[25pt]
&=
\displaystyle\sum_{y\in V}\sum_{x\in V,x\neq y}\frac{1}{2}
\cdot|\nabla^s u|^{p-2}(y)
W_s(x,y)(u(x)-u(y))v(x),
\end{array}
\end{equation}
and inserting (\ref{38}) into (\ref{37}), we complete the proof of this lemma.
\end{proof}

In the next lemma, we provide discrete versions of the maximum principle.
\begin{Lemma}\label{maximumprinciple}
If $u$ satisfies the following equation
\begin{equation}\label{01}
\left\{
\begin{array}{lll}
a(x,t)\partial_t u(x,t)+(-\Delta)_p^su>0,
\hspace{0.25cm}\mathrm{in}\hspace{0.25cm}V\times(0,T)  \\[15pt]
u(x,t)|_{t=0}=u_0>0,
\end{array}
\right.
\end{equation}
where $0<a(x,t)\leq C$ and $C$ is some constant independent of $t$,  $s\in(0,1)$ and $p>1$.
Then we have
\begin{equation}\label{02}
\min_{ V\times(0,T) }u\geq \min_{ V }u_0.
\end{equation}
Similarly, if $u$ satisfies
\begin{equation}\label{03}
\left\{
\begin{array}{lll}
a(x,t)\partial_t u(x,t)+(-\Delta)_p^su<0,\\[15pt]
u(x,t)|_{t=0}=u_0>0,
\end{array}
\right.
\end{equation}
then we have
\begin{equation}\label{04}
\max_{ V\times(0,T) }u\leq \max_{ V }u_0.
\end{equation}

\end{Lemma}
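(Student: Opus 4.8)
The plan is to derive both assertions from a single pointwise property of $(-\Delta)_p^s$ at a vertex realizing the spatial extremum, and to handle the time variable through a ``first crossing'' argument rather than by differentiating $m(t):=\min_{x\in V}u(x,t)$, which in general is only Lipschitz. Throughout I assume, as is implicit in (\ref{01}), that $t\mapsto u(x,t)$ is $C^1$ on $[0,T)$ for every $x$, so that $m$ is continuous on $[0,T)$.

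First I would record the pointwise inequality. If $x_0\in V$ attains $\min_{x\in V}u(x,t_0)$, then $u(x_0,t_0)-u(y,t_0)\le 0$ for every $y\in V$; since $W_s(x_0,y)>0$ by (\ref{34}) and the weights $|\nabla^s u|^{p-2}(x_0)+|\nabla^s u|^{p-2}(y)$ are nonnegative (using the convention that a summand is dropped wherever the gradient vanishes, and noting that $\nabla^s u(x)=0$ forces $u(\cdot,t_0)$ to be spatially constant because every $W_s$ is positive), each term of the defining sum for $(-\Delta)_p^s u(x_0,t_0)$ is $\le 0$. Hence $(-\Delta)_p^s u(x_0,t_0)\le 0$, and the strict inequality in (\ref{01}) together with $a(x_0,t_0)>0$ then forces $\partial_t u(x_0,t_0)>0$.

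Next I would prove (\ref{02}) by contradiction. Suppose $m(t_1)<m(0)=\min_V u_0$ for some $t_1\in(0,T)$. Since $m$ is continuous on $[0,t_1]$ it attains its minimum there; let $t^*$ be the smallest point at which that minimum is attained. From $m(t^*)\le m(t_1)<m(0)$ we get $t^*>0$, and by minimality of $t^*$ we have $m(t)>m(t^*)$ for all $t\in[0,t^*)$. Fix $x^*\in V$ with $u(x^*,t^*)=m(t^*)$. By the pointwise step, $\partial_t u(x^*,t^*)>0$, so $u(x^*,t)<u(x^*,t^*)=m(t^*)$ for all $t<t^*$ sufficiently close to $t^*$; but also $u(x^*,t)\ge m(t)>m(t^*)$ for every such $t$, a contradiction. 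Hence $m(t)\ge m(0)$ on $(0,T)$, which is (\ref{02}).

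Finally, for (\ref{04}) I would use that the operator is odd, $(-\Delta)_p^s(-u)=-(-\Delta)_p^s u$, since $|\nabla^s(-u)|=|\nabla^s u|$ and each difference $(-u)(x)-(-u)(y)$ is the negative of $u(x)-u(y)$; thus if $u$ solves (\ref{03}) then $v:=-u$ solves $a\,\partial_t v+(-\Delta)_p^s v>0$, and since the argument above never used the sign of the initial datum, applying (\ref{02}) to $v$ gives $\max_{V\times(0,T)}u\le\max_V u_0$. The one genuine subtlety is that the vertex attaining the spatial minimum may jump as $t$ varies, so $m$ need not be differentiable; choosing $t^*$ as the \emph{earliest} minimizer of $m$ on $[0,t_1]$ is exactly what lets the strict sign of $\partial_t u(x^*,t^*)$ produce a contradiction without differentiating $m$ itself.
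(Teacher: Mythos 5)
Your proof is correct and follows essentially the same strategy as the paper: at a vertex realizing the spatial minimum every summand $W_s(x_0,y)(u(x_0)-u(y))$ is nonpositive, so $(-\Delta)_p^s u(x_0,t_0)\le 0$, and the strict differential inequality then forces a sign on $\partial_t u$ that is incompatible with the extremum. Where you go beyond the paper is in the bookkeeping: the paper takes a global space--time minimizer over $V\times(0,T]$ and asserts $\partial_t u\le 0$ there, which quietly presumes the minimum is attained on the open time interval, whereas your ``earliest minimizer of $m(t)$ on $[0,t_1]$'' argument avoids that issue cleanly; you also explicitly handle the degenerate case $1<p<2$, where $|\nabla^s u|^{p-2}$ is singular at a critical point (resolved, as you note, because $\nabla^s u(x)=0$ forces $u(\cdot,t_0)$ to be constant since all $W_s(x,y)>0$), a point the paper does not address; and you derive the maximum estimate from the minimum one via the oddness of $(-\Delta)_p^s$ rather than repeating the argument. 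These are refinements of the same proof rather than a different route, and each one patches a detail the paper glosses over.
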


\begin{proof}
If the inequality (\ref{02}) is  false, then there exists
$(x_0,t_0)\in V\times(0,T]$
satisfying $\min_{ V\times(0,T) }u=u(x_0,t_0)<\min_{ V }u$. Moreover, we can obtain
$\partial_t u_1(x_0,t_0)\leq0$. This together with the equation (\ref{01}) leads to
\begin{equation*}
(-\Delta)_p^s u(x_0,t_0)>0.
\end{equation*}
According to the definition of $(-\Delta)_p^s$, we get $u(x_0,t_0)>u(y,t_0)$ for any $y\in V$ and $y\neq x_0$, which leads to a contradiction.  Therefore, we have the estimation  (\ref{02}).

On the other hand, if (\ref{04}) is false,  then there exists $(x_0,t_0)\in V\times (0,T]$ such that
$\max_{ V\times(0,T) }u=u(x_0,t_0)$ and $u(x_0,t_0)> \max_{ V }u$.
Furthermore, we have $\partial_t u(x_0,t_0)\geq0$. Then from the equation (\ref{03}) yields that
\begin{equation*}
(-\Delta)_p^s u(x_0,t_0)=\frac{1}{2\mu(x_0)}\sum_{y\in V,y\neq x_0}
\left( |\nabla^s u|^{p-2}(y)+|\nabla^s u|^{p-2}(x_0) \right)
W_s(x_0,y)(u_1(x_0)-u(y)) <0.
\end{equation*}
Hence, there is $u(x_0)<u(y)$. However, this is a contradiction.

To sum up, we obtain the desired  estimates.

\end{proof}

Now, in the following lemma, we prove the existence of a solution to the nonlinear parabolic fractional Laplacian equation.

\begin{Lemma}\label{parabolic}
For any $s\in(0,1)$ and $p>1$, we consider the equation
\begin{equation}\label{par}
\left\{
\begin{array}{lll}
a(x,t)\partial_t u(x,t)+(-\Delta)_p^su(x,t)=0,\\[15pt]
u(x,t)|_{t=0}=u_0>0,
\end{array}
\right.
\end{equation}
where $0<a(x,t)\leq C$ and $C$ is some constant independent of $t$. Then the parabolic equation (\ref{par}) has a unique solution $u(x,t)$ for any $0\leq t<+\infty$.
\end{Lemma}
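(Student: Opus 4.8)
The plan is to exploit the fact that $G=(V,E)$ is a \emph{finite} graph, so that $C(G)$ is identified with $\mathbb{R}^{\#V}$ and (\ref{par}) becomes an initial value problem for a system of ordinary differential equations,
\begin{equation*}
\partial_t u(x,t)=-\frac{1}{a(x,t)}\,(-\Delta)_p^s u(x,t)=:G(t,u)(x),\qquad u(\cdot,0)=u_0;
\end{equation*}
in contrast to an infinite--dimensional situation, one needs neither a Galerkin scheme nor Lemma \ref{compact} here. First I would note that $u\mapsto(-\Delta)_p^s u$ is continuous on $\mathbb{R}^{\#V}$: the only delicate range is $1<p<2$, where $|\nabla^s u|^{p-2}$ is unbounded near the constant functions, but each summand $|\nabla^s u|^{p-2}(x)\,(u(x)-u(y))$ is dominated by $C\,|\nabla^s u(x)|^{p-1}$, using $|\nabla^s u(x)|^2\ge\frac{W_s(x,y)}{2\mu(x)}(u(x)-u(y))^2$ together with (\ref{34}), and hence vanishes there. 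Since $a>0$, the field $G$ is continuous in $(t,u)$, so Peano's theorem yields a solution on a maximal interval $[0,T^{*})$.

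Next I would establish the two--sided a priori bound $0<\min_V u_0\le u(x,t)\le\max_V u_0$ on $[0,T^{*})$. Because $(-\Delta)_p^s$ only sees the differences $u(x)-u(y)$, it is unchanged when one adds to $u$ any function of $t$ alone; so for $\delta>0$ and fixed $T<T^{*}$ the shift $v:=u+\delta(t-T)$ satisfies $a\,\partial_t v+(-\Delta)_p^s v=a\delta>0$ with $v|_{t=0}=u_0-\delta T$, and the first half of Lemma \ref{maximumprinciple}, after letting $\delta\to0$, gives $u\ge\min_V u_0$; the symmetric choice $v:=u-\delta(t-T)$ with the second half of Lemma \ref{maximumprinciple} gives $u\le\max_V u_0$. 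Thus $u(\cdot,t)$ stays in the compact set $K:=\prod_{x\in V}[\min_V u_0,\max_V u_0]\subset(0,\infty)^{\#V}$, on which $G$ is bounded on each finite time interval (where $a\ge c_0>0$); hence the solution cannot escape in finite time, so $T^{*}=+\infty$ and the claimed bounds hold.

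For uniqueness, I would take two solutions $u_1,u_2$, set $w=u_1-u_2$, multiply the identity $a\,\partial_t w=-\big((-\Delta)_p^s u_1-(-\Delta)_p^s u_2\big)$ by $w$, sum over $V$ against $\mu$, and apply the integration by parts lemma; the right--hand side then equals $-\int_V\big(|\nabla^s u_1|^{p-2}\nabla^s u_1-|\nabla^s u_2|^{p-2}\nabla^s u_2\big)\cdot\nabla^s w\,d\mu$, which is $\le0$ by the elementary monotonicity inequality $\big(|\xi|^{p-2}\xi-|\eta|^{p-2}\eta\big)\cdot(\xi-\eta)\ge0$ (valid for all $p>1$, applied fibrewise with $\xi=\nabla^s u_1(x)$, $\eta=\nabla^s u_2(x)$). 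This leaves $\sum_{x\in V}a(x,t)\mu(x)\,w\,\partial_t w\le0$; writing $w\,\partial_t w=\tfrac12\partial_t(w^2)$ and using the mild time regularity of $a$ available in the applications (e.g.\ $a$ Lipschitz in $t$; the bound is immediate when $a$ is $t$--independent) one gets $\frac{d}{dt}\sum_x\tfrac{a\mu}{2}w^2\le C\sum_x\tfrac{a\mu}{2}w^2$, and Gronwall's inequality together with $w(\cdot,0)=0$ forces $w\equiv0$. (For $p\ge2$ one could alternatively check that $G$ is locally Lipschitz on $K$ and invoke Picard--Lindel\"of, but the monotonicity argument covers all $p>1$ at once.)

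The main obstacle I anticipate is uniqueness: for $1<p<2$ the map $u\mapsto(-\Delta)_p^s u$ is not Lipschitz (and it is not $C^1$ for $2<p<4$ either), so Picard--Lindel\"of is unavailable and one is forced to use the monotonicity of the fractional $p$-Laplacian together with the integration by parts lemma. A secondary point is that Lemma \ref{maximumprinciple} is stated with a \emph{strict} inequality, which is exactly why the translation invariance of $(-\Delta)_p^s$ and the shifts $u\mapsto u\pm\delta(t-T)$ are needed to apply it to the equality (\ref{par}); these a priori bounds are precisely what prevents finite--time blow-up and promotes local existence to global existence.
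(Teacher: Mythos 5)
Your proposal is correct and shares the paper's overall skeleton: identify $C(G)$ with $\mathbb{R}^{\#V}$, reduce (\ref{par}) to an ODE system, obtain the invariant box $\min_V u_0\le u\le\max_V u_0$ from Lemma \ref{maximumprinciple} applied to the perturbed functions $u\pm\epsilon t$ (your shift $u\pm\delta(t-T)$ is an equivalent variant), and use that a priori bound to promote local to global existence; like you, the paper does not need Lemma \ref{compact} here. The genuine divergence is in how existence and uniqueness are justified. The paper claims $\bigl|\partial M(u)(x_j)/\partial u(x_j)\bigr|\le C$ on the invariant box and invokes Picard--Lindel\"of (equivalently, a fixed point argument); but its supporting assertion that the $L^\infty$ bounds keep $|\nabla^s u|^{p-2}$ bounded is not justified for $1<p<2$, since $\nabla^s u$ can vanish (e.g.\ near constant states) inside the box --- only the products $|\nabla^s u|^{p-2}(x)\,(u(x)-u(y))$ stay bounded, which is precisely the cancellation you exploit to get continuity of the vector field and apply Peano. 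For uniqueness you replace the Lipschitz bound by the fibrewise monotonicity $(|\xi|^{p-2}\xi-|\eta|^{p-2}\eta)\cdot(\xi-\eta)\ge0$, the integration-by-parts lemma, and Gronwall, which works for every $p>1$ at the modest cost of some time-regularity and a positive lower bound for $a$ (both available in the application $a=qu_{n-1}^{q-1}$, by (\ref{15})). In short, the paper's route is shorter and is rigorous essentially when $M$ is genuinely $C^1$ in $u$, while your route is slightly longer but covers the degenerate and singular ranges of $p$ where the paper's derivative computation breaks down.
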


\begin{proof}
Firstly, we show that the solution $u$ to the equation (\ref{par}) is bounded.
For any $\epsilon>0$, let $\tilde{u}=u+\epsilon t$ and $t\in(0,T)$. Then we have
$\tilde{u}\rightarrow u $ as $\epsilon\rightarrow 0$ and $\tilde{u}$ satisfies
\begin{equation*}\label{05}
\left\{
\begin{array}{lll}
a(x,t)\partial_t\tilde{u}+(-\Delta)_p^s \tilde{u}=a(x,t)\epsilon>0, \hspace{0.1cm}\mathrm{in}\hspace{0.1cm}V\times(0,T),\\[10pt]
\tilde{u}|_{t=0}=u_0>0.
\end{array}
\right.
\end{equation*}
This together with (\ref{02}) implies that
\begin{equation}\label{06}
\min_{ V\times(0,T) }\tilde{u}\geq \min_{ V }u_0.
\end{equation}
Letting $\epsilon\rightarrow 0$, we obtain
\begin{equation}\label{07}
\min_{ V\times(0,T) }u\geq \min_{ V }u_0.
\end{equation}
Similarly, we  let $\tilde{u}=u-\epsilon t$ for sufficient small $\epsilon>0$ and $t\in(0,T)$, then
note that $\tilde{u}$ satisfies
\begin{equation*}\label{08}
\left\{
\begin{array}{lll}
a(x,t)\partial_t\tilde{u}+(-\Delta)_p^s \tilde{u}=-a(x,t)\epsilon<0, \hspace{0.1cm}\mathrm{in}\hspace{0.1cm}V\times(0,T),\\[10pt]
\tilde{u}_1|_{t=0}=u_0>0.
\end{array}
\right.
\end{equation*}
This combined with (\ref{04}) implies that
\begin{equation}\label{09}
\max_{ V\times(0,T) }\tilde{u}\leq \max_{ V }u_0.
\end{equation}
Letting $\epsilon\rightarrow0$ in (\ref{09}), we find that
\begin{equation}\label{010}
\max_{ V\times(0,T) }u\leq \max_{ V }u_0.
\end{equation}
Furthermore,  by combining  the inequality (\ref{07}) and (\ref{010}), then applying these results to the equation  (\ref{par})  yields  that
\begin{equation}\label{011}
0<\min_V u_0 \leq u \leq \max_V u_0.
\end{equation}

We next prove the existence of a solution to the equation (\ref{par}).
Without loss of generality, we assume $V=\{x_1,x_2,\cdots,x_n\}$ for some integer $n\geq1$. Then for any function $u:V\rightarrow\mathbb{R}$ can be represented by $u=(u(x_1),u(x_2),\cdots,u(x_n))\in\mathbb{R}^n$. For any $1\leq j\leq n$,
we denote
\begin{align*}
M(u)(x_j)
:&=-\displaystyle\frac{1}{a(x_j,t)}(-\Delta)_p^su(x_j)\\[15pt]
&=-\displaystyle\frac{1}{a(x_j,t)}
\displaystyle\frac{1}{2\mu(x_j)}\sum_{y\in V,y\neq x_j}
\left( |\nabla^s u|^{p-2}(y)+|\nabla^s u|^{p-2}(x_j) \right)
W_s(x_j,y)(u(x_j)-u(y)).
\end{align*}
Then the equation (\ref{par}) is equivalent to the following  ordinary differential system
\begin{equation}\label{ord}
\left\{
\begin{array}{lll}
\displaystyle\frac{d}{dt} u(x_1)=M(u)(x_1),\\[10pt]
\hspace{1cm}\vdots\\[10pt]
\displaystyle\frac{d}{dt} u(x_n)=M(u)(x_n),\\[10pt]
u(0)=u_0=(u_0(x_1),\cdots,u_0(x_n)).
\end{array}
\right.
\end{equation}
To proceed, a prior estimate (\ref{011}) shows that
any solution $u$   to  (\ref{ord})  satisfies  $0<\min_V u_0 \leq u \leq \max_V u_0<\infty$, then the terms  $ |\nabla^s u|^{p-2} $  remains  bounded.  Furthermore, 
we can calculate that
\begin{equation*}
\begin{array}{lll}
\left|\displaystyle\frac{\partial M(u)(x_j) }{\partial u(x_j)}\right|
&\leq
\left|\displaystyle\frac{1}{a(x_j,t)}\frac{1}{2\mu(x_j)}\sum_{y\in V,y\neq x_j}
\left( |\nabla^s u|^{p-2}(y)+|\nabla^s u|^{p-2}(x_j) \right)
W_s(x_j,y) \right|\\[25pt]
&+
\left|
\displaystyle\frac{1}{a(x_j,t)}\displaystyle\frac{p-2}{2}
\left(\frac{1}{2\mu(x_j)}\sum_{y\in V,y\neq x_j}W_s(x_j,y)(u(x_j)-u(y))\right)^{\frac{p}{2}}
\frac{1}{\mu(x_j)}\sum_{y\in V,y\neq x_j}W_s(x_j,y)\right|\\[25pt]
&\leq C,
\end{array}
\end{equation*}
where $C$ is some positive  constant.     Now,
according to the short time existence theorem of ordinary equation, there exists a unique solution $u(x,t)$ satisfying  the system (\ref{ord}) for any $t<+\infty$. Hence, we finish the proof of lemma \ref{parabolic}.
(Actually, we use the fixed point theorem to find the unique solution in the space
$\{  0<\min_V u_0 \leq u \leq \max_V u_0<\infty   \}$ due to a prior estimate (\ref{011}).)
\end{proof}

The next lemma gives the existence of solution to the nonlinear parabolic fractional Laplacian equation
$ \partial_t u^q+(-\Delta)_p^s u=0$.

\begin{Lemma}\label{exist}
For any $s\in(0,1)$ and $q>0$ with $p>1$, we consider the equation
\begin{equation}\label{1}
\left\{
\begin{array}{lll}
\partial_t u^q(x,t)+(-\Delta)_p^su=0,\\[15pt]
u(x,t)|_{t=0}=u_0>0,
\end{array}
\right.
\end{equation}
then there exists a solution to the equation (\ref{1}) for any  finite time interval.
\end{Lemma}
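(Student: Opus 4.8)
The plan is to realise the doubly nonlinear equation (\ref{1}) as a fixed point of an iteration built on Lemma \ref{parabolic}. Writing $\partial_t u^q=q u^{q-1}\partial_t u$, equation (\ref{1}) becomes $q u^{q-1}\partial_t u+(-\Delta)_p^s u=0$, which is of the type solved in Lemma \ref{parabolic} with weight $a=q u^{q-1}$, except that the weight now depends on the unknown. So I would set $u^{(0)}(x,t):=u_0(x)$ and, given $u^{(k-1)}$, let $u^{(k)}$ be the solution produced by Lemma \ref{parabolic} of
\begin{equation*}
q\,(u^{(k-1)})^{q-1}\,\partial_t u^{(k)}+(-\Delta)_p^s u^{(k)}=0,\qquad u^{(k)}|_{t=0}=u_0 .
\end{equation*}
Each step is legitimate by induction: the a priori bound (\ref{011}) gives $0<\min_V u_0\le u^{(k-1)}\le\max_V u_0$ on $[0,\infty)\times V$, so the weight $a_k:=q(u^{(k-1)})^{q-1}$ is bounded above and below by positive constants depending only on $q$ and $u_0$; hence Lemma \ref{parabolic} delivers $u^{(k)}$ on all of $[0,\infty)$, again with $0<\min_V u_0\le u^{(k)}\le\max_V u_0$.

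Next I would collect uniform-in-$k$ bounds on a fixed interval $[0,T]$. Since $V$ is finite and all the $u^{(k)}$ lie in the fixed range $[\min_V u_0,\max_V u_0]$, the quantity $(-\Delta)_p^s u^{(k)}$ is bounded uniformly in $k$ and $t$ --- for $p>1$ the factor $|\nabla^s u^{(k)}|^{p-2}$ multiplies only differences $u^{(k)}(x)-u^{(k)}(y)$, so each summand is controlled by $|\nabla^s u^{(k)}|^{p-1}$, which stays bounded. Hence $\partial_t u^{(k)}=-a_k^{-1}(-\Delta)_p^s u^{(k)}$ is bounded uniformly in $k$, so $\{u^{(k)}\}$ is bounded and equi-Lipschitz in $t$ on $[0,T]$. (Testing with $\partial_t u^{(k)}$ and invoking the integration-by-parts lemma additionally yields $\tfrac1p\tfrac{d}{dt}\int_V|\nabla^s u^{(k)}|^p d\mu=-\int_V a_k(\partial_t u^{(k)})^2 d\mu$, hence a uniform bound for $\int_0^T\!\int_V(\partial_t u^{(k)})^2 d\mu\,dt$; this is the estimate underlying (\ref{40})--(\ref{41}).)

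Finally I would pass to the limit. The most direct route is to show the iteration contracts: let $d_k:=u^{(k)}-u^{(k-1)}$, so $d_k|_{t=0}=0$; subtracting the $k$-th and $(k-1)$-st equations and using the uniform bounds together with the local Lipschitz dependence, on the admissible range $[\min_V u_0,\max_V u_0]$, of $v\mapsto(-\Delta)_p^s v$ and of $v\mapsto(qv^{q-1})^{-1}$, one gets $|\partial_t d_k|\le K(\|d_k\|_\infty+\|d_{k-1}\|_\infty)$ pointwise on $[0,T]\times V$; integrating from $t=0$ and using Gr\"onwall's inequality gives $\|u^{(k)}-u^{(k-1)}\|_{C([0,T]\times V)}\le C\,(KT)^k/k!$. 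Thus $\{u^{(k)}\}$ is Cauchy in $C([0,T]\times V)$, and the limit $u$, together with its $t$-derivative, solves $q u^{q-1}\partial_t u+(-\Delta)_p^s u=0$ with $u(0)=u_0$ --- that is (\ref{1}) --- on every finite interval, with $0<\min_V u_0\le u\le\max_V u_0$ inherited in the limit. An alternative that sidesteps the Lipschitz estimate is to use the compactness of Lemma \ref{compact} (or just Arzel\`a--Ascoli, since $C(G)$ is finite-dimensional) to extract $u^{(k_j)}\to u$ and $u^{(k_j-1)}\to\tilde u$ uniformly, pass to the limit to obtain $q\tilde u^{q-1}\partial_t u+(-\Delta)_p^s u=0$, and then identify $\tilde u=u$ through the uniqueness assertion of Lemma \ref{parabolic}. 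I expect the genuine difficulty to sit precisely in this limit step when $1<p<2$: there $|\nabla^s v|^{p-2}$ degenerates where $\nabla^s v=0$, so $(-\Delta)_p^s$ is only H\"older (not Lipschitz) near such configurations, and one must either verify that the products $|\nabla^s v|^{p-2}(v(x)-v(y))$ are regular enough on the admissible set or else rely on the compactness-and-uniqueness route, which only uses continuity of $(-\Delta)_p^s$ there.
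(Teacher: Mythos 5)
Your proposal follows essentially the same route as the paper: the paper's proof sets up exactly your iteration $qu_{n-1}^{q-1}\partial_t u_n+(-\Delta)_p^s u_n=0$ solved via Lemma \ref{parabolic}, derives the same uniform bounds $0<\min_V u_0\le u_n\le\max_V u_0$ and $|\partial_t u_n|\le C$, and then extracts a limit by the Aubin--Lions compactness of Lemma \ref{compact} rather than by your Gr\"onwall contraction. One caveat on your fallback route: identifying $\tilde u=u$ ``through the uniqueness assertion of Lemma \ref{parabolic}'' does not close as stated --- uniqueness for the frozen-coefficient problem only tells you that $u$ is \emph{the} solution associated with the weight $q\tilde u^{q-1}$, i.e.\ $u=S(\tilde u)$ for the solution map $S$, which is not the fixed-point relation $u=S(u)$ unless you separately show $\tilde u=u$, e.g.\ by proving $\|u^{(k)}-u^{(k-1)}\|\to 0$ (your contraction route) or by some comparison/monotonicity argument. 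Your two flagged concerns are in fact the weak points of the paper's own argument: its limit passage in (\ref{19})--(\ref{21}) silently treats $u_{n-1}$ and $u_n$ as converging to the same function along the same subsequence, and its Lipschitz bound for the vector field in Lemma \ref{parabolic} involves $|\nabla^s u|^{p-2}$ without the compensating difference factor, which degenerates for $1<p<2$ where $\nabla^s u=0$. So your write-up is, if anything, a more honest account of the same proof; to make it airtight for all $p>1$ one would still need to resolve the $1<p<2$ regularity of $v\mapsto(-\Delta)_p^s v$ or supply the missing identification step.
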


\begin{proof}
To begin with,  we consider the equation
\begin{equation}\label{4}
\left\{
\begin{array}{lll}
qu_0^{q-1}\partial_tu_1+(-\Delta)_p^s u_1=0, \hspace{0.1cm}\mathrm{in}\hspace{0.1cm}V\times(0,T),\\[10pt]
u_1|_{t=0}=u_0>0.
\end{array}
\right.
\end{equation}
Applying the estimate (\ref{011}) to the equation (\ref{4})  yields  that
\begin{equation}\label{13}
0<\min_V u_0 \leq u_1 \leq \max_V u_0.
\end{equation}
Subsequently, we consider the iteration equation as follows:
\begin{equation}\label{14}
\left\{
\begin{array}{lll}
qu_{n-1}^{q-1}\partial_tu_n+(-\Delta)_p^s u_n=0, \hspace{0.1cm}\mathrm{in}\hspace{0.1cm}V\times(0,T),\\[10pt]
u_n|_{t=0}=u_0>0.
\end{array}
\right.
\end{equation}
As the similar step  in (\ref{13}), we arrive at the estimate
\begin{equation}\label{15}
0<\min_V u_0 \leq u_n \leq \max_V u_0
\end{equation}
for any $n\in \mathbb{N}^*$. Furthermore, according to the lemma \ref{parabolic}, we know that there exists some solution to the equation (\ref{14}) in any finite time interval. Next, we give some
uniform  estimates of the function $u$. From the definition of $(-\Delta)_p^s$, we derive that
\begin{equation}\label{16}
\begin{array}{lll}
\Big|(-\Delta)_p^su_n(x,t)\Big|
&=\left|\displaystyle\frac{1}{2\mu(x)}\sum_{y\in V,y\neq x}
\left( |\nabla^s u_n|^{p-2}(y)+|\nabla^s u_n|^{p-2}(x) \right)
W_s(x,y)(u_n(x)-u_n(y))\right|\\[20pt]
&\leq C \Big| \max_{V\times (0,T)}u_n(x,t)-\min_{V\times (0,T)}u_n(x,t) \Big|\\[10pt]
&\leq C\Big| \max_{V}u_0-\min_{V}u_0 \Big|\\[20pt]
&\leq C,
\end{array}
\end{equation}
where $C$ is some positive constant independent of $t$. On the other hand, we note that
\begin{equation*}
qu_{n-1}^{q-1}\partial_tu_n=-(-\Delta)_p^s u_n,
\end{equation*}
this together with the estimates (\ref{15})-(\ref{16}) yields that
\begin{equation}\label{17}
\big|\partial_tu_n \big|\leq C.
\end{equation}
By the lemma \ref{compact} and letting  $p_0=p_1=2$, we can obtain
\begin{equation}\label{18}
u_n\in L^{2}(T_0,T_1; W^{1,2}(V))
\hspace{0.25cm}  \mathrm{and} \hspace{0.25cm}
\partial_tu_n\in L^{2}(T_0,T_1; L^2(V)).
\end{equation}
And we know that $u_n\in \mathcal{B}$, where $ \mathcal{B} $ is the following
Banach space
\begin{equation*}
\mathcal{B}=\left\{
u: u\in L^{2}(T_0,T_1; W^{1,2}(V))\hspace{0.2cm} \mathrm{and} \hspace{0.2cm} \partial_t u\in L^{2}(T_0,T_1; L^2(V))
\right\}
\end{equation*}
and $\mathcal{B}\subset L^{2}(T_0,T_1; L^2(V))$ is compact. Therefore, there exists some function $u$ such that
\begin{equation}\label{19}
u_n\rightarrow u \hspace{0.2cm} \mathrm{in} \hspace{0.2cm} L^{2}(T_0,T_1; L^2(V))
\end{equation}
as $n\rightarrow+\infty$. Moreover,  combining (\ref{15}) and (\ref{19}) leads to
\begin{equation*}\label{20}
\left\{
\begin{array}{lll}
&\partial_tu_n\rightharpoonup\partial_tu  \hspace{0.2cm} \mathrm{in} \hspace{0.2cm} L^{2}(T_0,T_1; L^2(V)),\\[10pt]
&(-\Delta)_p^s u_n \rightarrow (-\Delta)_p^s u(x)
\hspace{0.2cm} \mathrm{in} \hspace{0.2cm}
L^{2}(T_0,T_1; L^2(V))
\end{array}
\right.
\end{equation*}
as $n\rightarrow+\infty$. By a direct computation and (\ref{19}), we can find  that
\begin{equation}\label{21}
u^{q-1}_n\rightarrow u^{q-1}
\left\{
\begin{array}{lll}
&\mathrm{in} \hspace{0.2cm} L^{\frac{p}{q-1}}(T_0,T_1; L^{\frac{p}{q-1}}(V)),\hspace{0.2cm} 1<p<\infty,\hspace{0.2cm}q>1,\\[10pt]
&\mathrm{in} \hspace{0.2cm} L^{\frac{p}{1-q}}(T_0,T_1; L^{\frac{p}{1-q}}(V)),\hspace{0.2cm} 1<p<\infty,\hspace{0.2cm}0<q<1,
\end{array}
\right.
\end{equation}
and
\begin{equation}\label{22}
\left\{
\begin{array}{lll}
&\partial_tu_n\rightharpoonup\partial_tu  \hspace{0.2cm} \mathrm{in} \hspace{0.2cm} L^{p}(T_0,T_1; L^p(V)),\hspace{0.2cm} 1<p<\infty,\\[10pt]
&(-\Delta)_p^s u_n \rightarrow (-\Delta)_p^s u(x)
\hspace{0.2cm} \mathrm{in} \hspace{0.2cm}
L^{p}(T_0,T_1; L^p(V)),\hspace{0.2cm} 1<p<\infty
\end{array}
\right.
\end{equation}
as $n\rightarrow+\infty$.  Back to the equation (\ref{14}), in conjunction with estimates (\ref{21}) and (\ref{22}), we can deduce that $u$ satisfies the equation
\begin{equation}
qu^{q-1}\partial_tu+(-\Delta)_p^s u=0.
\end{equation}
In conclusion, we finish the proof of the lemma \ref{exist}.
\end{proof}

\section{Some estimates}

The next lemma presents some important estimates of the function $u$.

\begin{Lemma}\label{estimate}
Let $q>0$ and $s\in (0,1)$ with $p>1$.
Assume $u$ solves the equation
\begin{equation}\label{23}
\left\{
\begin{array}{lll}
\partial_t u^q(x,t)+(-\Delta)_p^su=0,\\[15pt]
u(x,t)|_{t=0}=u_0,
\end{array}
\right.
\end{equation}
then $u$ satisfies the following estimates:
\begin{equation*}
\displaystyle\int_0^{\infty}\int_V(u^{\frac{q-1}{2}}\partial_t u)^2d\mu dt
\leq
\frac{1}{pq}\displaystyle\int_V|\nabla^s u_0|^pd\mu
\end{equation*}
and
\begin{equation*}
\displaystyle\frac{q}{q+1}
\displaystyle\int_{V\times\{T\}}u^{q+1} d\mu
+
\displaystyle\int_0^T\int_V|\nabla^s u|^pd\mu dt
=
\displaystyle\frac{q}{q+1}\int_V u_0^{q+1}d\mu.
\end{equation*}
\end{Lemma}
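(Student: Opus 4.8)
The plan is to obtain both relations by the standard energy method: test the equation \eqref{23} against $u$ and against $\partial_t u$, sum over $V$ against $d\mu$, and invoke the integration-by-parts lemma proved above. To make this rigorous it suffices to know that the solution $u$ furnished by Lemma \ref{exist} is $C^1$ in $t$ with values in $\mathbb{R}^{\#V}$; this holds because $u$ solves the equivalent pointwise identity $q u^{q-1}\partial_t u+(-\Delta)_p^s u=0$, so $\partial_t u=-(q u^{q-1})^{-1}(-\Delta)_p^s u$ is a continuous function of the vector $(u(z))_{z\in V}$, which is bounded and bounded away from $0$ by the a priori estimate $0<\min_V u_0\le u\le\max_V u_0$, and the finiteness of $V$ turns the whole problem into a finite ODE system. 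All time derivatives and $V$-sums below are then genuine and may be freely interchanged.

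For the equality \eqref{41}, I would multiply \eqref{23} by $u$ and sum against $d\mu$. Since $u\,\partial_t u^q=q u^q\partial_t u=\tfrac{q}{q+1}\partial_t(u^{q+1})$, the parabolic term contributes $\tfrac{q}{q+1}\tfrac{d}{dt}\int_V u^{q+1}\,d\mu$, while the integration-by-parts lemma with $v=u$ gives $\int_V u\,(-\Delta)_p^s u\,d\mu=\int_V|\nabla^s u|^{p-2}\nabla^s u\cdot\nabla^s u\,d\mu=\int_V|\nabla^s u|^p\,d\mu$. Hence $\tfrac{q}{q+1}\tfrac{d}{dt}\int_V u^{q+1}\,d\mu+\int_V|\nabla^s u|^p\,d\mu=0$, and integrating over $[0,T]$ with $u(\cdot,0)=u_0$ yields exactly \eqref{41}.

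For \eqref{40}, I would instead test \eqref{23} with $\partial_t u$. The parabolic term becomes $\int_V \partial_t u\cdot q u^{q-1}\partial_t u\,d\mu=q\int_V (u^{(q-1)/2}\partial_t u)^2\,d\mu$, and the integration-by-parts lemma with $v=\partial_t u$, together with the linearity of $\nabla^s$ in $u$ (so $\nabla^s(\partial_t u)=\partial_t\nabla^s u$) and the chain rule $|\nabla^s u|^{p-2}\nabla^s u\cdot\partial_t\nabla^s u=\tfrac1p\partial_t|\nabla^s u|^p$, gives $\int_V \partial_t u\,(-\Delta)_p^s u\,d\mu=\tfrac1p\tfrac{d}{dt}\int_V|\nabla^s u|^p\,d\mu$. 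Thus $q\int_V(u^{(q-1)/2}\partial_t u)^2\,d\mu+\tfrac1p\tfrac{d}{dt}\int_V|\nabla^s u|^p\,d\mu=0$; integrating over $[0,T]$, discarding the nonnegative term $\tfrac1p\int_{V\times\{T\}}|\nabla^s u|^p\,d\mu$, dividing by $q$, and letting $T\to\infty$ by monotone convergence (the integrand being nonnegative) produces \eqref{40}.

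The only step needing care is the chain-rule identity in the $[0,T]$ computation for \eqref{40} when $1<p<2$, since the weight $|\nabla^s u|^{p-2}$ is singular where $\nabla^s u$ vanishes. I would handle it by noting that for each fixed $x$ the quantity $|\nabla^s u(x)|^2=\tfrac1{2\mu(x)}\sum_{y\neq x}W_s(x,y)(u(x)-u(y))^2$ is a nonnegative quadratic form in $(u(z))_{z\in V}\in\mathbb{R}^{\#V}$, and a nonnegative quadratic form raised to the power $p/2>1/2$ is a $C^1$ function of its argument: its gradient extends continuously by $0$ across the zero set precisely because $p>1$, using the elementary bound $|\nabla^s u(x)|^{p-2}|u(x)-u(y)|\le C\,|\nabla^s u(x)|^{p-1}\to0$ (which follows from $W_s(x,y)(u(x)-u(y))^2\le 2\mu(x)|\nabla^s u(x)|^2$). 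Therefore $t\mapsto\tfrac1p\int_V|\nabla^s u|^p\,d\mu$ is $C^1$ along the $C^1$ curve $t\mapsto u(\cdot,t)$, with derivative $\int_V|\nabla^s u|^{p-2}\nabla^s u\cdot\partial_t\nabla^s u\,d\mu$; equivalently this functional is $\tfrac1p\|\nabla^s u\|_p^p$, whose first variation is $(-\Delta)_p^s u$ — which is exactly the content of the integration-by-parts lemma — so all the manipulations above are legitimate.
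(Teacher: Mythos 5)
Your proof is correct and follows essentially the same energy method as the paper: test the equation against $\partial_t u$ and against $u$, and invoke the integration-by-parts lemma to identify $\int_V u\,(-\Delta)_p^s u\,d\mu=\int_V|\nabla^s u|^p d\mu$ and $\int_V \partial_t u\,(-\Delta)_p^s u\,d\mu=\frac1p\frac{d}{dt}\int_V|\nabla^s u|^p d\mu$. The only differences are cosmetic: you recognize $qu^q\partial_t u=\frac{q}{q+1}\partial_t(u^{q+1})$ directly where the paper reaches the same conclusion via an integration by parts in time, and you add a careful $C^1$ justification of the chain rule for $\frac1p\int_V|\nabla^s u|^p d\mu$ when $1<p<2$ that the paper leaves implicit.
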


\begin{proof}
Multiplying  both  sides of the equation (\ref{23})  by $\partial_t u$ and then integrating it on $V\times(0,T)$, we have
\begin{equation*}
\displaystyle\int_{0}^{T}
\int_V qu^{q-1}(\partial_t u)^2d\mu dt
+\displaystyle\int_{0}^{T}\int_V \partial_t u\cdot(-\Delta)_p^su d\mu dt=0,
\end{equation*}
and due to integration by parts, we deduce that
\begin{equation*}
q\displaystyle\int_{0}^{T}
\int_V(u^{\frac{q-1}{2}}\partial_t u )^2 d\mu dt
+\displaystyle\frac{1}{p}\int_{V\times\{T\}}|\nabla^su|^pd\mu
=\displaystyle\frac{1}{p}\int_{V}|\nabla^su_0|^pd\mu.
\end{equation*}
This leads to
\begin{equation*}\label{39}
\displaystyle\int_{0}^{\infty}\int_V(u^{\frac{q-1}{2}}\partial_t u )^2 d\mu dt
\leq
\displaystyle\frac{1}{pq}\int_{V}|\nabla^su_0|^pd\mu.
\end{equation*}

On the other hand,  multiplying the equation (\ref{23}) by $u$ and also integrating it on $V\times(0,T)$, we arrive at
\begin{equation}\label{24}
\displaystyle
\int_{0}^{T}\int_{V} q u^q\partial_t u d\mu dt+
\displaystyle\int_{0}^{T}\int_{V}|\nabla^su|^pd\mu dt=0,
\end{equation}
where the term
\begin{equation}\label{25}
\begin{array}{lll}
\displaystyle
\int_{0}^{T}\int_{V} q u^q\partial_t u d\mu dt
=\displaystyle\int_{V\times\{T\}}qu^{q+1}d\mu
-\displaystyle\int_{V\times\{T\}}qu_0^{q+1}d\mu
-
\int_{0}^{T}\int_{V} q^2 u^q\partial_t u d\mu dt.
\end{array}
\end{equation}
Thus,  the equality (\ref{25}) implies that
\begin{equation}\label{26}
(q^2+q)\displaystyle
\int_{0}^{T}\int_{V}  u^q\partial_t u d\mu dt
=
\displaystyle\int_{V\times\{T\}}qu^{q+1}d\mu
-\displaystyle\int_{V}qu_0^{q+1}d\mu.
\end{equation}
Inserting (\ref{26}) into (\ref{24}), we obtain that
\begin{equation*}
\displaystyle\frac{q}{q+1}
\int_{V\times\{T\}}u^{q+1}d\mu
+
\displaystyle\int_{0}^{T}\int_{V}|\nabla^su|^pd\mu dt
=
\displaystyle\frac{q}{q+1}
\int_{V}u_0^{q+1}d\mu.
\end{equation*}
\end{proof}

Now, by letting $T\rightarrow+\infty$  gives the  desired estimates in Theorem \ref{thm1}.

{\bf Acknowledgements.}
The author is supported by the Outstanding
Innovative Talents Cultivation Funded Programs 2023 of Renmin University of China.

\bigskip

\newpage

\end{sloppypar}
\end{document}